\tikzset{node distance=2cm, auto}
\newtheorem{theorem}{Theorem}[section]
\newtheorem{cor}[theorem]{Corollary}
\newtheorem{lemma}[theorem]{Lemma}
\newtheorem{defn}[theorem]{Definition}
\newtheorem*{defn*}{Definition}
\newtheorem{prop}[theorem]{Proposition}
\newtheorem{question}[theorem]{Question}
\newtheorem{remark}[theorem]{Remark}
\newtheorem{claim}[theorem]{Claim}
\newtheorem{nota}[theorem]{Notation}
\title{Taking Reinhardt's Power Away}
\date{}
\author{Richard Matthews\footnote{email: R.M.A.Matthews@leeds.ac.uk}}
\affil{University of Leeds}
\newcommand{\critj}{\textnormal{crit(\textit{j})} }
\newcommand{\critp}{\textnormal{crit\textsubscript{\textit{p}}} }
\newcommand{\critw}{\textnormal{crit\textsubscript{\textit{w}}} }
\newcommand{\crits}{\textnormal{crit\textsubscript{\textit{s}}} }
\newcommand{\cminus}{\raisebox{.33\height}{\scalebox{0.75}{$-$}}}
\newcommand{\uminus}{\raisebox{.1\height}{\scalebox{0.75}{$-$}}}
\newcommand{\Vbold}{\textnormal{\textsc{V}}\xspace} 
\newcommand{\ZFbold}{\textnormal{\textsc{ZF}}\xspace}
\newcommand{\ZFboldminus}{\textnormal{\textsc{ZF}}\cminus\xspace}
\newcommand{\ZFCboldminus}{\textnormal{\textsc{ZFC}}\cminus\xspace}
\newcommand{\ZFCbold}{\textnormal{\textsc{ZFC}}\xspace}
\newcommand{\ZFKbold}{\textnormal{\textsc{ZF(K)}}\xspace}
\newcommand{\ZFCDCminusbold}{\textnormal{\textsc{ZFC}$^{\uminus}$ + \textsc{DC}\textsubscript{$<$\textsc{Ord}}} \xspace}
\newcommand{\Mbold}{\textnormal{\textsc{M}}\xspace}
\newcommand{\Nbold}{\textnormal{\textsc{N}}\xspace}
\newcommand{\Hbold}{\textnormal{\textsc{H}}\xspace}
\newcommand{\Ibold}{\textnormal{\textsc{I}}\xspace}
\newcommand{\Tbold}{\textnormal{\textsc{T}}\xspace}
\newcommand{\Kbold}{\textnormal{\textsc{K}}\xspace}
\newcommand{\Lbold}{\textnormal{\textsc{L}}\xspace}
\newcommand{\Wbold}{\textnormal{\textsc{W}}\xspace}
\newcommand{\DCbold}{\textnormal{\textsc{DC}}\xspace}
\newcommand{\ACbold}{\textnormal{\textsc{AC}}\xspace}
\newcommand{\GBbold}{\textnormal{\textsc{GB}}\xspace}
\newcommand{\HSbold}{\textnormal{\textsc{HS}}\xspace}
\newcommand{\HRbold}{\textnormal{\textsc{HR}}\xspace}
\newcommand{\GBboldminus}{\textnormal{\textsc{GB}}\cminus\xspace}
\newcommand{\Ordbold}{\textnormal{\textsc{Ord}}\xspace}
\DeclareMathOperator{\sym}{\textnormal{\textrm{sym}}}
\DeclareMathOperator{\fix}{\textnormal{\textrm{fix}}}
\DeclareMathOperator{\trcl}{\textnormal{\textrm{trcl}}}
\DeclareMathOperator{\supremum}{\textnormal{\textrm{sup}}}
\DeclareMathOperator{\rank}{\textnormal{\textrm{rank}}}
\DeclareMathOperator{\dom}{\textnormal{\textrm{dom}}}
\DeclareMathOperator{\ran}{\textnormal{\textrm{ran}}}
\DeclareMathOperator{\fld}{\textnormal{\textrm{fld}}}
\DeclareMathOperator{\maximum}{\textnormal{\textrm{max}}}
\DeclareMathOperator{\coll}{\textnormal{\textrm{coll}}}
\DeclareMathOperator{\Add}{\textnormal{\textrm{Add}}}
\DeclareMathOperator{\Col}{\textnormal{\textrm{Col}}}
\DeclareMathOperator{\restrict}{\upharpoonright}
\DeclareFontFamily{U}{mathx}{\hyphenchar\font45}
\DeclareFontShape{U}{mathx}{m}{n}{
      <5> <6> <7> <8> <9> <10>
      <10.95> <12> <14.4> <17.28> <20.74> <24.88>
      mathx10
      }{}
\DeclareSymbolFont{mathx}{U}{mathx}{m}{n}
\DeclareMathAccent{\widecheck}{0}{mathx}{"71}
\begin{document}

\maketitle



\begin{abstract}
\noindent We study the notion of non-trivial elementary embeddings $j : \Vbold \rightarrow \Vbold$ under the assumption that \Vbold satisfies \ZFCbold without Power Set but with the Collection Scheme. We show that no such embedding can exist under the additional assumption that it is cofinal and either $\Vbold_{\critj}$ is a set or that the Dependent Choice Schemes holds. We then study failures of instances of collection in symmetric submodels of class forcings.
\end{abstract}

\section{Introduction}

A vital tool in many set-theoretic arguments is the assumption that various large cardinal notions are consistent. That is, it is possible to have a cardinal which exhibits certain additional properties that cannot provably exist arguing from \ZFCbold alone. Often these properties can be expressed using the first ordinal moved, or the \emph{critical point}, of an elementary embedding 

\vspace{-5pt}

$$j : \Vbold \rightarrow \Mbold$$

\noindent where \Mbold is some transitive class, with the general principle being that the closer \Mbold is to $\Vbold,$ the stronger the resulting large cardinal assumption. For example, the critical point is said to be $\lambda$-strong if $\Vbold_\lambda \subseteq \Mbold$ or $\lambda$-supercompact if \Mbold is closed under arbitrary sequences of length $\lambda$. \\ \indent There is a natural limit to these large cardinals, originally proposed by Reinhardt \cite{srk}, which is for $\kappa$ to be the critical point of a non-trivial elementary embedding from \Vbold to itself. However, as shown by Kunen \cite{kun71} there is no such non-trivial embedding when \Vbold is a model of \ZFCbold along with a predicate for $j$ such that \Vbold satisfies all instances of replacement and separation in the language expanded to include this predicate. In fact, Kunen's proof shows that there is no non-trivial elementary embedding

\vspace{-5pt}

$$j : \Vbold_{\lambda + 2} \rightarrow \Vbold_{\lambda + 2}$$

\noindent for any ordinal $\lambda$ under the assumption that \Vbold satisfies $\ZFCbold.$ \\ 

\noindent Since the announcement of this result it has been a long standing and much studied question as to whether or not the axiom of choice is necessary for this result. Namely, if it is consistent for there to be a non-trivial elementary embedding $j : \Vbold \rightarrow \Vbold$ under the assumption that \Vbold is a model of $\ZFbold.$ In this paper we take a different approach to generalising Kunen's inconsistency which is to study such embeddings in the theory \ZFCbold \emph{without Power Set}. The motivation for this is the following result which shows that such embeddings are consistent under the assumption that $\Ibold_1$ is consistent, which is an axiom just short of the Kunen inconsistency. In particular, $\Ibold_1$ will give a non-trivial embedding from $\Hbold_{\lambda^+}$ to itself, which is one of the standard structures which models \ZFCbold without Power Set.

\begin{restatable*}{theorem}{characterisingione}
\label{characterising I1}
There exists an elementary embedding $k : \Vbold_{\lambda + 1} \rightarrow \Vbold_{\lambda + 1}$ if and only if there exists an elementary embedding $j : \Hbold_{\lambda^+} \rightarrow \Hbold_{\lambda^+}$.
\end{restatable*}

\noindent It is a well-known result that without Power Set many of the usual equivalent ways to formulate the axioms of \ZFCbold break down. In particular, the Replacement Scheme no longer implies the stronger Collection Scheme and the Axiom of Choice does not imply that every set can be well-ordered. Without the Collection Scheme many of the basic facts that one assumes no longer hold, for example we can consistently have that $\omega_1$ exists and is singular or that the \L o\'{s} ultrapower theorem can fail. One can find these and other similar results in \cite{ghj}. So, using the notation found in that paper, we shall define \emph{the theory} \ZFCbold \emph{without Power Set} as follows:

\begin{defn} Let $\ZFboldminus$ denote the theory consisting of the following axioms: Empty set, Extensionality, Pairing, Unions, Infinity, the Foundation Scheme, the Separation Scheme and the Replacement Scheme. 

$\ZFbold^{\uminus}$ denotes the theory $\ZFboldminus$ plus the Collection Scheme.

$\ZFCbold^{\uminus}$ denotes the theory $\ZFbold^{\uminus}$ plus the Well-Ordering Principle.

$\ZFCbold_{Ref}^{\uminus}$ denotes the theory $\ZFCbold^{\uminus}$ plus the Reflection Principle.

\ZFCDCminusbold denotes the theory $\ZFCbold^{\uminus}$ plus the $\DCbold_\mu$-Scheme for every cardinal $\mu$.
\end{defn}

We will also use the corresponding notation for their second order versions $\GBbold^{\uminus}$ and \GBboldminus. \\

\noindent The main result of this paper is that, under mild assumptions, Kunen's inconsistency does still hold in the theory $\ZFCbold^{\uminus} _j$ (the theory $\ZFCbold^{\uminus}$ with a predicate for $j$) and therefore that, while $\Hbold_{\lambda^+}$ has more structure than $\Vbold_{\lambda + 1}$, the embedding given by $\Ibold_1$ cannot have one of the most useful properties an embedding can have, cofinality.

\begin{restatable*}{theorem}{nocofinal}
\label{no cofinal}
There is no non-trivial, cofinal, $\Sigma_0$-elementary embedding $j : \Vbold \rightarrow \Vbold$ such that $\Vbold \models \ZFCbold^{\uminus}_j$ and $\Vbold_{\critj} \in \Vbold$.
\end{restatable*}

\noindent The proof of the above theorem makes essential use of the fact that the initial segment of the universe up to the critical point of $j$, $\Vbold_{\critj}$ is a set. However, by strengthening the underlying theory to also satisfy the \emph{Dependent Choice Scheme of length} $\mu$ for every cardinal $\mu$, this can be removed.

\begin{restatable*}{cor}{nonewithref}
There is no non-trivial, cofinal, $\Sigma_0$-elementary embedding $j : \Vbold \rightarrow \Vbold$ such that $\Vbold \models (\ZFCDCminusbold)_j$.
\end{restatable*}

\noindent The second half of the paper deals with a curious property of the theory \ZFCDCminusbold which is needed for the above result. This is the property that if \Vbold is a model of \ZFCDCminusbold and $\mathcal{C}$ is a definable proper class over \Vbold then for any non-zero ordinal $\gamma$ there is a definable surjection of $\mathcal{C}$ onto $\gamma$. A reasonable suggestion for a counter-example to this property without choice is to construct a model of $\ZFbold^{\uminus}$ with an amorphous class, where 

\begin{defn}
An infinite class $A$ is said to be \emph{amorphous} if it cannot be partitioned into two infinite classes.
\end{defn}

\noindent Note that such a class could not surject onto $\omega$ otherwise it could be partitioned into the class mapped to even numbers and the class mapped to odd numbers.

However, we show that, without choice, many structures which would otherwise satisfy Collection only satisfy the Replacement Scheme. This is done by proving the following result which shows that the existence of an amorphous proper class implies the failure of collection and thus a failure of our hoped-for counter-example. Moreover, using this we can show that a symmetric submodel of a pretame class forcing need not model the Collection Scheme.

\begin{restatable*}{theorem}{collectionfails}
\label{Collection Fails} Suppose that $\langle \Mbold, A \rangle$ satisfies;
\begin{enumerate}
\item $\Mbold \models (\ZFboldminus)_A$,
\item $A \subseteq \Mbold$ and $\langle \Mbold, A \rangle \models ``A \textit{ is a proper class''}$,
\item $\langle \Mbold, A \rangle \models \textit{`` if } B \subseteq A \textit{ is infinite then B is a proper class''}$.
\end{enumerate}
Then the Collection Scheme fails in $\langle \Mbold, A \rangle$. Moreover, $\langle \Mbold, A \rangle$ does not have a cumulative hierarchy and therefore the Power Set also fails.
\end{restatable*}

\noindent \textbf{Acknowledgements} This paper is part of the author's PhD thesis, supervised by Andrew Brooke-Taylor and Michael Rathjen. I am grateful to Philipp Schlicht for pointing out the use of dependent choice in the original proof of Theorem \ref{classes are big}, Johannes Sch\"{u}rz whose suggestion of considering amorphous classes led to the second half of this paper and Asaf Karagila for many insightful conversations about symmetric submodels. I would also like to thank Bea Adam-Day and John Howe for sitting through and then giving feedback on various early iterations of this work.



\newpage
\section[Embeddings of H lambda+]{Embeddings of $\Hbold_{\lambda^+}$}

\begin{defn} $\Ibold_1$ is the assertion that there exists a non-trivial elementary embedding \mbox{$k : \Vbold_{\lambda + 1} \rightarrow \Vbold_{\lambda + 1}.$}
\end{defn}

\noindent $\Ibold_1$ is considered one of the strongest large cardinal axioms that is not known to be inconsistent. The following result is adapted from the folklore result which gives an alternate characterisation of 1-extendible cardinals, a proof of which can be found in \cite{bt}. This theorem shows an equivalent way of considering $\Ibold_1$ embeddings as embeddings of $\Hbold_{\lambda^+}$, a set with much more structure than $\Vbold_{\lambda + 1}$.

\characterisingione

\begin{proof} $(\Leftarrow):$ By the Kunen inconsistency, $\lambda$ must be the supremum of the critical sequence $\langle \kappa_n : n \in \omega \rangle$ of $j$, where $\kappa_0$ is the critical point and $\kappa_{n + 1} = j(\kappa_n)$. Then each $\kappa_n$ is an inaccessible cardinal and thus $2^{< \lambda} = \lambda = \beth_\lambda$. Therefore $\Vbold_\lambda = \Hbold_\lambda$ and $| \Vbold_\lambda | = \lambda$ so $\Vbold_\lambda \in \Hbold_{\lambda^+}$. This means that $\Vbold_{\lambda + 1} = \{ x \in \Hbold_{\lambda^+} : x \subseteq \Vbold_{\lambda} \}$, so $\Vbold_{\lambda + 1}$ is a definable class in $\Hbold_{\lambda^+}$. Moreover, working in $\Hbold_{\lambda^+}$, any formula $\varphi$ can be relativised to $\Vbold_{\lambda + 1}$ so $j \hspace{-0.3pt} \restrict \Vbold_{\lambda + 1} : \Vbold_{\lambda + 1} \rightarrow \Vbold_{\lambda + 1}$ is elementary. \\

\noindent $(\Rightarrow):$ We begin by defining a standard way to code elements of $\Hbold_{\lambda^+}$ by elements of $\Vbold_{\lambda+ 1}$. This will be done by coding $\trcl(\{x\})$ by some subset of $\lambda \times \lambda$ whose Mostowski collapse is again $\trcl(\{x\})$. However, since we will be working with $\trcl(\{x\})$ rather than $x$ itself, it is necessary to do a simple, preliminary coding.

So, let $\hat{\Hbold} \coloneqq \{ \trcl(\{x\}) : x \in \Hbold_{\lambda^+} \}$. For $\trcl(\{x\}), \trcl(\{y\}) \in \hat{\Hbold}$ define the relation $\hat{\in}$ by $\trcl(\{x\}) ~ \hat{\in} ~ \trcl(\{y\})$ if and only if $x \in y$ and similarly for $\hat{=}$. It is then clear that any first order statement $\varphi$ about $\Hbold_{\lambda^+}$ is equivalent to a formula $\hat{\varphi}$ over $\hat{\Hbold}$ by the obvious coding. \\

\noindent Now note that $\rank(\lambda \times \lambda) = \lambda$ and so any subset of $\lambda \times \lambda$ has rank at most $\lambda$. So, for any $x \in \Hbold_{\lambda^+}$ and bijection $f : |\trcl(\{x\})| \rightarrow \trcl(\{x\})$ let $$ C_{x,f} \coloneqq \{ \langle \alpha, \beta \rangle \in \lambda \times \lambda : f(\alpha) \in f(\beta) \} \in \Vbold_{\lambda + 1}.$$ Then the Mostowski collapse of $C_{x,f}$, $\coll(C_{x,f})$, is $\trcl(\{x\})$. Let $\tilde{H}$ denote the definable class in $\Vbold_{\lambda + 1}$ of all subsets of $\lambda \times \lambda$ which code an element of $\hat{\Hbold}$ in this way. That is $X \in \tilde{H}$ iff $X$ is a well-founded, extensional, binary relation on $\lambda$ with a single maximal element and $\dom(X) \cup \ran(X)$ is a cardinal which is at most $\lambda$. \\

\noindent For $Z \in \tilde{H}$, let $\fld(Z)$ be $\dom(Z) \cup \ran(Z)$ and define $\maximum(Z)$ to be the unique element of $\fld(Z)$ which is maximal with respect to the relation on $Z$. Now, for $X, Y \in \tilde{H}$ define relations $\tilde{=}$ and $\tilde{\in}$ by: 

\vspace{-5pt}

\begin{gather*}
X ~ \tilde{=} ~ Y  \Longleftrightarrow \exists g : \lambda \rightarrow \lambda ~ (g ~ \textit{is a bijection  } \wedge ~ \forall \alpha, \beta \in \lambda \\
\phantom{X ~ \tilde{=} ~ Y  \Longleftrightarrow \exists g : \lambda \rightarrow} (\langle \alpha, \beta \rangle \in X \leftrightarrow \langle g(\alpha), g(\beta) \rangle \in Y)) 
\end{gather*} 

\vspace{-5pt}

\begin{gather*}
X ~ \tilde{\in} ~ Y \Longleftrightarrow \exists g : \lambda \rightarrow \lambda ~ (g \textit{ is injective } \wedge ~ \langle g(\maximum(X)), \maximum(Y) \rangle \in Y \\
\phantom{X ~ \tilde{\in} ~ Y \Longleftrightarrow \exists g :} \wedge ~ \forall \alpha, \beta \in \fld(X) ~ (\langle \alpha, \beta \rangle \in X \leftrightarrow \langle g(\alpha), g(\beta) \rangle \in Y).
\end{gather*}

\vspace{5pt}

Then $~ \tilde{=} ~$ and $~ \tilde{\in} ~$ are definable in $\Vbold_{\lambda + 1}$, with $X ~ \tilde{=} ~ Y \Longleftrightarrow \coll(X) = \coll(Y)$ and $X ~ \tilde{\in} ~ Y \Longleftrightarrow \coll(X) \in \coll(Y)$. Now we have that any first order statement $\hat{\varphi}$ about $\hat{\Hbold}$ is equivalent to a formula $\tilde{\varphi}$ over $\Vbold_{\lambda + 1}$ which is defined by the following coding: \begin{itemize}
\item Replace any parameter $\trcl(\{x\})$ occurring in $\hat{\varphi}$ with $C_{x, f}$ for some (any) bijection $f : |\trcl(\{x\})| \rightarrow \trcl(\{x\})$.
\item Replace any instance of $\hat{=}$ with $\tilde{=}$ and $\hat{\in}$ with $\tilde{\in}$.
\item Replace any unbounded quantification by the same quantifier taken over $\tilde{H}$.
\end{itemize}
Then, by the elementarity of $k$, 
\begin{center}

\vspace{-20pt}
\begin{eqnarray*}
X ~ \tilde{=} ~ Y  & \Longleftrightarrow &  k(X) ~ \tilde{=} ~ k(Y) \\
& \textrm{ and } & \\
X ~ \tilde{\in} ~ Y & \Longleftrightarrow & k(X) ~ \tilde{\in} ~ k(Y).
\end{eqnarray*}
\end{center}

\noindent Also, since $\tilde{\Hbold}$ is a definable class in $\Vbold_{\lambda + 1}$ the restriction of the embedding $k \restrict \tilde{\Hbold} : \tilde{\Hbold} \rightarrow \tilde{\Hbold}$ is still elementary.

 So we can define $j : \Hbold_{\lambda^+} \rightarrow \Hbold_{\lambda^+}$ by setting $j(x)$ to be the unique element of $\coll(k(C_{x,f}))$ of maximal rank for some bijection $f : | \trcl(\{x\})| \rightarrow \trcl(\{x\})$. Moreover $j$ is elementary since

\vspace{-70pt}

\begin{flushright}
{\setstretch{2.5}
\begin{align*} \Hbold_{\lambda^+}  \models \varphi(x_1, \dots, x_n) & \Longleftrightarrow \hat{\Hbold} \models \hat{\varphi}(\trcl(\{x_1\}), \dots, \trcl(\{x_n\})) \\ \phantom{\Hbold_{\lambda^+} \models \varphi(x_1)} & \Longleftrightarrow  \tilde{\Hbold} \models \tilde{\varphi} \big( C_{x_1,f_1}, \dots, C_{x_n, f_n} \big) \\ \phantom{\Hbold_{\lambda^+} \models \varphi(x_1)} & \Longleftrightarrow \tilde{\Hbold} \models \tilde{\varphi} \big( k(C_{x_1,f_1}), \dots, k(C_{x_n, f_n}) \big) \\ \phantom{\Hbold_{\lambda^+} \models \varphi(x_1)} & \Longleftrightarrow \hat{\Hbold} \models \hat{\varphi} \big( \coll(k(C_{x_1,f_1})), \dots, \coll(k(C_{x_n, f_n})) \big) \\ \phantom{\Hbold_{\lambda^+} \models \varphi(x_1)} & \Longleftrightarrow \Hbold_{\lambda^+} \models \varphi(j(x_1), \dots, j(x_n)). \qedhere
\end{align*}}  \end{flushright} \end{proof}

\noindent The above theorem shows that the existence of a non-trivial elementary embedding from $\Vbold$ to itself under $\ZFCbold^{\uminus}$ is weaker than $\Ibold_1$, however it does not show that the embedding one obtains has any useful structure. What we shall show is that this embedding must fail one of the most useful fundamental characteristics, that of \emph{cofinality} where

\begin{defn} An embedding $j : \Mbold \rightarrow N$ is said to be cofinal if for every $y \in \Nbold$ there is an $x \in \Mbold$ such that $y \in j(x)$.
\end{defn}

\begin{remark}
If \Mbold satisfies \ZFbold and $\Nbold \subseteq \Mbold$ then the cumulative hierarchy for \Mbold witnesses that any elementary embedding is cofinal.
\end{remark}

\newpage
\section{Definable Embeddings}

We begin this section with a standard fact about non-trivial elementary embeddings which is that they must move an ordinal. The only notable thing about the statement is that the proof only requires elementarity for bounded formulae. 

\begin{prop} \label{crit exists}
Suppose that $\Mbold \models \ZFbold^{\uminus}$, $\Nbold \subseteq \Mbold$ is a transitive class model of $\ZFbold^{\uminus}$ and \mbox{$j : \Mbold \rightarrow \Nbold$} is a non-trivial, $\Sigma_0$-elementary embedding. Then there exists an ordinal $\alpha$ such that $j(\alpha) > \alpha$.
\end{prop}

\begin{proof}
First note that $\Sigma_0$-elementarity implies $\Delta_1$-elementarity. Now, since being an ordinal is $\Sigma_0$ definable, if $\alpha$ is an ordinal then so is $j(\alpha)$. Next, since $\emptyset$ is definable as the unique set $z$ such that $\forall y \in z ~ (y \neq y)$, which is a $\Sigma_0$ formula, $j(\emptyset) = \emptyset$. So, by induction, we have that for every ordinal $\alpha$, $j(\alpha) \geq \alpha$. Now let $x$ be a set of least rank such that $j(x) \neq x$ and let $\delta = \rank(x)$. Then for all $y \in x$, $y = j(y) \in j(x)$ so $x \subseteq j(x)$. Thus there must be some $z \in j(x) \setminus x$. Now suppose that $\rank(j(x)) = \delta$, then we must have that $j(z) = z \in j(x)$ so, by elementarity, $z \in x$ which yields a contradiction. Hence, since the following is $\Delta_1$ definable, we must have that $j(\delta) = \rank(j(x)) > \delta$.  
\end{proof}

\noindent Therefore, given a non-trivial, $\Sigma_0$-elementary $j : \Mbold \rightarrow \Nbold$, we will define the critical point of $j$ to be the least ordinal moved and denote it by $\critj$. 

It is not a priori obvious that being an elementary embedding should be definable by a single sentence. However, as proven by Gaifman in \cite{gai}, if $\Mbold$ is a model of a sufficient fragment\footnote{Gaifman's original proof is done under the assumption that \Mbold is a model of Zermelo, that is \ZFbold with separation but not replacement. He then comments that the assumption of Power Set can be replaced by the existence of Cartesian products.} of \ZFCbold then it suffices to check that a cofinal embedding is elementary for $\Sigma_0$ sentences. The version below for the case where $\Mbold \models \ZFbold^{\uminus}$ appears in \cite{ghj}.

\begin{theorem}[Gaifman] \label{fully elementary}
Suppose that $\Mbold$ is a model of $\ZFbold^{\uminus}$ and $j : \Mbold \rightarrow \Nbold$ is a cofinal, \mbox{$\Sigma_0$-elementary} embedding. Then $j$ is fully elementary.
\end{theorem}

\begin{remark} This theorem does not require any assumptions on $\Nbold.$ Moreover, the models \Mbold and \Nbold need not be transitive. \end{remark}

\noindent Using the fact that being $\Sigma_0$-elementary is definable by a single formula we obtain a version of Suzuki's theorem on the non-definability of embeddings, \cite{suz}, in the context of $\ZFbold^{\uminus}$. 

\begin{theorem}[Suzuki] \label{no definable}
Assume that $\Vbold \models \ZFbold^{\uminus}$. Then there is no non-trivial, cofinal, elementary embedding $j : \Vbold \rightarrow \Vbold$ which is definable from parameters.
\end{theorem}

\begin{proof}
Formally, this is a theorem scheme asserting that for each formula $\varphi$ there is no parameter $p$ for which $\varphi(\cdot, \cdot, p)$ defines a non-trivial, cofinal, elementary embedding $j : \Vbold \rightarrow \Vbold$. Using Theorem \ref{fully elementary}, it suffices to show that for no parameter $p$ are we able to define a non-trivial, cofinal, \mbox{$\Sigma_0$-elementary} embedding $j : \Vbold \rightarrow \Vbold$ by $$j(x) = y \Longleftrightarrow \varphi(x, y, p) ~ \textrm{holds.}$$ 

So, seeking a contradiction, let $\sigma(p)$ be the sentence asserting that $\varphi(\cdot, \cdot, p)$ defines a $\Sigma_0$-elementary embedding and let $\psi(p)$ asserts that $\varphi(\cdot, \cdot, p)$ defines a total function which is non-trivial, cofinal and $\Sigma_0$-elementary. That is, $$\psi(p) \equiv \forall x \exists ! y ~ \varphi(x, y, p) ~ \wedge \exists x ~ \neg \varphi(x, x, p) ~ \wedge ~ \forall y \exists x, z ~ (\varphi(x, z, p) ~ \wedge ~ y \in z) ~ \wedge ~ \sigma(p)$$ Let $\vartheta(p, \kappa)$ assert that $\kappa$ is the critical point of $j$. So, $$\vartheta(p, \kappa) \equiv \kappa \in \Ordbold ~ \wedge ~ \forall \alpha \in \kappa ~ \varphi(\alpha, \alpha, p) ~ \wedge ~ \neg \varphi(\kappa, \kappa, p).$$  Then, by Proposition \ref{crit exists}, $$\Vbold \models \psi(p) \rightarrow \exists ! \kappa ~ \vartheta(p, \kappa).$$ So denote by \critp the (unique) $\kappa$ for which $\vartheta(p, \kappa)$ holds. Now fix $p$ such that \critp is as small as possible, that is such that $$\Vbold \models \psi(p) ~ \wedge ~ \forall w ~ (\psi(w) \rightarrow \critp \leq \critw).$$ Then, by elementarity, $$\Vbold \models \exists s ~ \varphi(p, s, p) ~ \wedge ~ \psi(s) ~ \wedge ~ \forall w (\psi(w) \rightarrow \crits \leq \critw)$$ But, $\Vbold \models \critp < \crits$ because the critical point of the embedding defined by $\varphi(\cdot, \cdot, s)$ must be $j(\critp)$, yielding a contradiction.
\end{proof}

\noindent We remark here that the main element of the proof was that being fully elementary can be expressed in a single sentence. Therefore by using Gaifman's original theorem, since the Collection Scheme wasn't used in the proofs of \ref{crit exists} and \ref{no definable}, the above proof also shows that there is no non-trivial cofinal elementary embedding of a model of Zermelo into itself which is definable from parameters. There are two obvious questions which appear here about whether or not the assumptions of cofinality and collection were necessary in the proof that there is no definable embedding. That is;

\begin{question}
Are either of the following two statements consistent:
\begin{enumerate}
\item There exists a non-trivial elementary embedding $j : \Vbold \rightarrow \Vbold$ which is definable from parameters where $\Vbold \models \ZFbold^{\uminus}$?
\item There exists a non-trivial, cofinal elementary embedding $j : \Vbold \rightarrow \Vbold$ which is definable from parameters where $\Vbold \models \ZFboldminus$?
\end{enumerate} 
\end{question}

\noindent It has been proven in \cite{ghj} that one can have cofinal, $\Sigma_1$-elementary embeddings of $\ZFboldminus$ which are not $\Sigma_2$-elementary, which is to say that Gaifman's Theorem can fail without the Collection Scheme. Therefore proving Suzuki's Theorem in either of these contexts would involve a different approach.

\newpage
\section{Choosing from Classes}

In this short section we mention how one can apply choice to set-length sequences of classes using the Collection Scheme. The standard way to do this in full \ZFCbold is by using Scott's trick  to replace each class by the set of elements of least rank of that class. However, if $\Vbold_\alpha$ is not a set for each $\alpha$ then this may not be possible so we have to be slightly more careful in our approach. 

Let $\mu$ be an ordinal and suppose that we have a sequence of non-empty classes $\langle \mathcal{C}_\alpha : \alpha \in \mu \rangle$ which are uniformly defined. This allows us to fix a formula $\varphi(v_0, v_1)$ saying that $v_1 \in \mathcal{C}_{v_0}$. Then, for each $\alpha \in \mu$ there is some set $x$ such that $\varphi(\alpha, x)$. So, by collection, there is some set $b$ such that for each $\alpha \in \mu$ there is some $x \in b$ such that $\varphi(\alpha, x)$. By well-ordering $b$, there is some cardinal $\tau$ and bijection $h : \tau \leftrightarrow b$. So for each $\alpha \in \mu$ we can define a choice function by taking $x_\alpha \in \mathcal{C}_\alpha$ to be $h(\gamma)$ for the least ordinal $\gamma \in \tau$ such that $\varphi(\alpha, h(\gamma))$. \\ \indent For example, suppose that $S \subseteq \mu$ were a stationary set which was partitioned into $\tau < \mu$ many sets $\langle S_\alpha : \alpha \in \tau \rangle$ and one wanted to show that for some $\alpha \in \tau$, $S_\alpha$ was stationary. Arguing for a contradiction, suppose that none of the $S_\alpha$ were stationary and for each $\alpha \in \tau$ define $\mathcal{C}_\alpha$ to be the non-empty class of clubs $D \subseteq \mu$ for which $D \cap S_\alpha \neq \emptyset$. By the above argument, we can choose a sequence of clubs $ \langle D_\alpha : \alpha \in \tau \rangle$ such that for each $\alpha$, $D_\alpha \in \mathcal{C}_\alpha$. Then $\bigcap_{\alpha \in \tau} D_\alpha \cap S = \emptyset$ yielding the required contradiction.

Using this idea we are able to prove many useful classical results without much change from their standard proofs. For completeness, we give here two such $\ZFCbold^{\uminus}$ results which we will then use in our proof of the Kunen inconsistency. 

\begin{theorem}[Fodor] Let $\mu$ be a regular cardinal, $S \subseteq \mu$ stationary and $f$ a regressive function on $S$.\footnote{A function $f : S \rightarrow \Ordbold$ is \emph{regressive} if for any non-zero $\alpha \in S$, $f(\alpha)< \alpha$.} Then there exists some stationary set $T \subseteq S$ and $\gamma \in \mu$ such that for all $\alpha \in T$, $f(\alpha) = \gamma$.
\end{theorem}

\begin{proof}
Assume for a contradiction that for each $\gamma \in \mu$ the set $\{ \alpha \in S : f(\alpha) = \gamma \}$ was non-stationary. Using the above comments, for each $\gamma \in \mu$ choose a club $D_\gamma$ such that for each $\alpha$ in $D_\gamma \cap S$, $f(\alpha)\neq \gamma$. Let $$D = \Delta_{\gamma \in \mu} D_\gamma \coloneqq \{ \alpha : \forall \beta \in \alpha ~ (\alpha \in D_\beta) \}$$ and note that this is club in $\mu$. Therefore $S \cap D$ is stationary, so in particular non-empty, and for any $\alpha \in S \cap D$ and $\gamma \in \alpha$, $f(\alpha) \neq \gamma$. So $f(\alpha) \geq \alpha$, contradicting the assumption that $f$ was regressive.
\end{proof}

\begin{defn} For cardinals $\delta < \mu$ let $S^\mu_\delta = \{ \alpha < \mu : cf(\alpha) = \delta \}$. \end{defn}

\begin{theorem}[Solovay]\label{solovay partition} Suppose that $\mu$ is an uncountable, regular cardinal and $S \subseteq S^\mu_\omega$ is stationary. Then there is a partition of $S$ into $\mu$ many disjoint stationary sets. \end{theorem}

\begin{proof}
First note that for each $\alpha \in S$ there is some increasing sequence of ordinals $\langle t_n : n \in \omega \rangle$ cofinal in $\alpha$. Therefore by the comments at the beginning of this section, for each $\alpha \in S$ choose an increasing sequence $\langle a^\alpha_n : n \in \omega \rangle$ cofinal in $\alpha$. Then, as in the usual proof, using our first example and the regularity of $\mu$ we can fix some $n \in \omega$ such that for each $\sigma \in \mu$, $\{ \alpha \in S : a^\alpha_n \geq \sigma \}$ is stationary in $\mu$. Now define a regressive function $f : S \rightarrow \mu$ by $f(\alpha) = a^\alpha_n$. Using Fodor's Theorem, for each $\sigma \in \mu$ fix some $S_\sigma$ stationary and $\gamma_\sigma \geq \sigma$ such that for all $\alpha \in S_\sigma$, $f(\alpha) = \gamma_\sigma$. Then if $\gamma_\sigma \neq \gamma_{\sigma'}$, $S_\sigma \cap S_{\sigma'} = \emptyset$ and, by the regularity of $\mu$, $| \{ S_\sigma : \sigma \in \mu \} | = \mu,$ which gives the required partition. 
\end{proof}

\section{Non-existence of embeddings}

We are now in the position to prove that there is no non-trivial, cofinal elementary embedding $j$ of $\ZFCbold^{\uminus}$  with $\Vbold_{\critj} \in \Vbold$. This shall be done in two parts; first we shall show that Woodin's proof of the Kunen inconsistency, which is the second proof of Theorem 23.12 in \cite{kan}, goes through in $\ZFCbold^{\uminus}$ under the additional assumption that $(\supremum\{j^n(\critj) : n \in \omega\})^+ \in \Vbold$. Then we shall show, by modifying the coding from \Cref{characterising I1}, that no cofinal embedding can exist in any model that sufficiently resembles $\Hbold_{\lambda^+}$. 

In order to prove this formally we shall work in a subtheory of the second order version of $\ZFCbold^{\uminus}$ which we shall denote as $\ZFCbold^{\uminus}_j$. Essentially this is the theory $\ZFCbold^{\uminus}$ along with a class predicate $j$ for the elementary embedding and the assertion that the Collection and Separation Schemes hold when expanded to include formulae with this predicate. To be more precise;

\begin{defn} Suppose that \Tbold is a first order theory, $\Mbold$ is a model of $\Tbold$ and $j$ is a class predicate. We say that $\Mbold \models \Tbold_j$ if \Mbold satisfies the Replacement / Collection / Reflection and Separation Schemes of \Tbold in the language expanded to include $j$. 
\end{defn}

\begin{theorem}\label{none with lambda+}
There is no non-trivial, elementary embedding $j : \Vbold \rightarrow \Vbold$ such that $\Vbold \models \ZFCbold^{\uminus}_j$ and $(\supremum\{j^n(\critj) : n \in \omega\})^+ \in \Vbold$.
\end{theorem}




\begin{proof}
Suppose for a contradiction that $j : \Vbold \rightarrow \Vbold$ was a non-trivial elementary embedding with critical point $\kappa$ and let $\lambda = \supremum \{ j^n(\kappa) : n \in \omega\}$. Then $j(\lambda) = \lambda$ and, since $\lambda^+$ is definable as the least cardinal above $\lambda$, $j(\lambda^+) = \lambda^+$. Now, using Theorem \ref{solovay partition}, let $\langle S_\alpha: \alpha \in \kappa \rangle$ be a partition of $S^{\lambda^+}_\omega$ into $\kappa$ many disjoint stationary sets and let $S = \{ \langle \alpha, S_\alpha \rangle : \alpha \in \kappa \}$. Then $j(S) = \{ \langle \alpha, T_\alpha \rangle : \alpha \in j(\kappa) \}$ and, by elementarity, $\langle T_\alpha : \alpha \in j(\kappa) \rangle$ is a partition of $S^{\lambda^+}_\omega$ into disjoint sets such that for each $\alpha$ $$ T_\alpha \textit{ is a stationary subset of } \lambda^+. $$ Also, we have that for each $\alpha \in \kappa$, $j(S_\alpha) = T_\alpha$. We claim that there is some $\beta \in \kappa$ such that $T_\kappa \cap S_\beta$ is stationary. For suppose not, then by our comments on choosing from set many classes, for each $\alpha$ we can fix a club $C_\alpha$ such that $T_\kappa \cap S_\alpha \cap C_\alpha = \emptyset$. Letting $C = \bigcap_{\alpha \in \kappa} C_\alpha$ we must have that $$\emptyset = T_\kappa \cap C \cap \bigcup_{\alpha \in \kappa} S_\alpha = T_\kappa \cap C, $$ contradicting the assumption that $T_\kappa$ was stationary. So fix $\beta$ such that $T_\kappa \cap S_\beta$ is stationary. Now, let $$U = \{ \gamma \in \lambda^+ : \gamma = j(\gamma) \}$$ and note that $U$ is closed under sequences of length $\omega$. Therefore there exists some $\sigma \in U \cap T_\kappa \cap S_\beta$. But then $\sigma = j(\sigma) \in j(S_\beta) = T_\beta$, contradicting the assumption that the $T_\alpha$ were disjoint. Hence no such embedding can exist.
\end{proof}

\begin{remark}
The above theorem did not require any assumption about $j$ being cofinal or that $\Vbold_{\critj}$ was a set.
\end{remark}

\nocofinal

\begin{proof}
Suppose for a contradiction that $j : \Vbold \rightarrow \Vbold$ was a non-trivial, cofinal, $\Sigma_0$-elementary embedding with critical point $\kappa$ and let $\lambda = \supremum \{ j^n(\kappa) : n \in \omega\}$. Note that, by an instance of replacement with the parameter $j$, $\lambda \in \Vbold$. Now there are two cases: \begin{itemize}
\item \textbf{Case 1:} $\lambda^+$ exists.
\item \textbf{Case 2:} For all $x \in \Vbold$, there is an injection $f : x \rightarrow \lambda$.
\end{itemize}

\noindent \textbf{Case 1:}
This is just a special case of Theorem \ref{none with lambda+}.

\noindent \textbf{Case 2:}

\vspace{12pt}

First note that by elementarity, since $\Vbold_\kappa \in \Vbold$ so is $\Vbold_{j^n(\kappa)}$ for each $n \in \omega$ and therefore $\Vbold_\lambda = \bigcup_{n \in \omega} \Vbold_{j^n(\kappa)} \in \Vbold$. Note also that $\lambda \times \lambda \in \Vbold$ and, by the Well-Ordering Principle, for each $x \in \Vbold$ there is a bijection $$f : | \trcl(\{x\}) | \rightarrow  \trcl(\{x\}).$$ \indent Moreover, since there is an injection of $x$ into $\lambda$, we must have that $| \trcl(\{x\}) | \leq \lambda$ for each $x \in \Vbold$. Now let $$C_{x, f} \coloneqq \{ \langle \alpha, \beta \rangle \in \lambda \times \lambda : f(\alpha) \in f(\beta) \}.$$ \indent Then $C_{x, f} \in \Vbold$ and therefore so is its Mostowski collapse, with $\coll(C_{x, f}) = \trcl(\{x\})$. This means that for any $x$ and bijection $f : | \trcl(\{x\}) | \rightarrow  \trcl(\{x\})$, 
\begin{eqnarray*}
j \big( \trcl(\{x\}) \big) & = & j \big( \coll(C_{x, f}) \big) = \coll \big( j(C_{x, f}) \big) \\
 & = & \coll \big( \bigcup_{\alpha < \lambda} j(C_{x, f} \cap \Vbold_\alpha) \big)\\
 & = & \coll \big( \bigcup_{\alpha < \lambda} j \upharpoonright \Vbold_\lambda (C_{x, f} \cap \Vbold_\alpha) \big). \end{eqnarray*}  


That is, $j$ is completely determined by its construction up to $\Vbold_\lambda$. Now, let $i \coloneqq j \restrict \Vbold_\lambda$ and note that, since $\Vbold_\lambda \times \Vbold_\lambda \in \Vbold$, so is $$i = \{ \langle x, y \rangle \in \Vbold_\lambda \times \Vbold_\lambda : j(x) = y \}.$$ Therefore, by defining $\varphi(\cdot, \cdot, i, \lambda)$ as
 \begin{eqnarray*}
\varphi(x, y, i, \lambda) & \equiv & \exists f, z, C_{x, f} ~ \big( \textit{``}\dom(f)\textit{ is a cardinal''} ~ \wedge ~ \ran(f) = \trcl(\{x\}) \\
&& \wedge \textit{``f is a bijection''} ~ \wedge C_{x, f} \coloneqq \{\langle \alpha, \beta \rangle \in \lambda \times \lambda : f(\alpha) \in f(\beta)\} ~ \\
&& \wedge ~ z = \coll \big( \bigcup_{\alpha < \lambda} i(C_{x, f} \cap \Vbold_\alpha) \big) \\ && \wedge ~ \textit{``y is the element of z of maximal rank''} \big) \end{eqnarray*}

\noindent we have that $\varphi(x, y, i, \lambda)$ holds if and only if $j(x) = y$ so $j$ is definable from the parameters $i$ and $\lambda$, both of which lie in $\Vbold$, contradicting Theorem \ref{no definable}. 
\end{proof}




\section[Removing the assumption that V \critj is in V]{Removing the assumption that $\Vbold_{\critj} \in \Vbold$}

Assuming that \Vbold satisfies the additional assumption of \emph{dependent choice of length} $\mu$ for every infinite cardinal $\mu$, we are able to remove the assumption that $\Vbold_{\critj} \in \Vbold$. This will be done by first proving that, in this theory, every proper class must surject onto any given non-zero ordinal. In particular, for $\Vbold_{\critj}$ to be a proper class it is necessary for $\Vbold_{\critj}$ to surject onto $j(\kappa)$ which we shall show cannot happen. Note that, in the standard \ZFCbold case, the cardinality of $\Vbold_{\critj}$ is $\critj$. 

In the \ZFbold context the principle of dependent choice of length $\mu$, for $\mu$ an infinite cardinal is the following statement formulated by L\'{e}vy \cite{lev}.

\begin{quote}
Let $S$ be a non-empty set and $R$ a binary relation such that for every $\alpha \in \mu$ and every $\alpha$-sequence $s = \langle x_\beta : \beta \in \alpha \rangle$ of elements of $S$ there exists some $y \in S$ such that $s R y$. Then there is a function $f : \mu \rightarrow S$ such that for every $\alpha \in \mu$, $(f \restrict \alpha) R f(\alpha)$. 
\end{quote}

\noindent In the more general $\ZFCbold^{\uminus}$ context we want to consider a natural class version of this where $S$ and $R$ are replaced by definable classes. Such classes can be considered as the collection of sets $x$ which satisfy $\psi(x, u)$ for some formula $\psi$. This leads to the definition of the $\DCbold_\mu$\emph{-Scheme} as the following: 

\begin{quote}
Let $\varphi$ and $\psi$ be formulae and $u$ and $w$ be sets such that for some $y$, $\psi(y, u)$ and for every $\alpha \in \mu$ and every $\alpha$-sequence $s = \langle x_\beta : \beta \in \alpha \rangle$ satisfying $\psi(x_\beta, u)$ for each $\beta$, there is a $z$ satisfying $\psi(z, u)$ and $\varphi(s, z, w)$. Then there is a function $f$ with domain $\mu$ such that for each $\alpha \in \mu$ $\psi(f(\alpha), u)$ and $\varphi((f \restrict \alpha), f(\alpha), w)$.
\end{quote}

\noindent For $\mu = \aleph_0$ we shall refer to this concept as the $\DCbold$-Scheme. An equivalent way to view the $\DCbold_\mu$-Scheme is the assertion that if $T$ is a tree that has no maximal element and is $\mu$-closed, that is to say every $\alpha$-sequence of nodes in $T$ has an upper bound, then $T$ has a branch of order type $\mu$. We note that, as with the set case, if $\delta < \mu$ are infinite cardinals and $\DCbold_\mu$ holds then so does $\DCbold_\delta$. 

\begin{prop} [\cite{lev}]
If $\delta < \mu$ are infinite cardinals then the $\DCbold_\mu$-Scheme implies the $\DCbold_\delta$-Scheme.
\end{prop}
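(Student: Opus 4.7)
\noindent\emph{Proof plan.} My plan is to reduce a $\DCbold_\delta$-Scheme instance to a $\DCbold_\mu$-Scheme instance by padding, applying $\DCbold_\mu$, and then restricting the resulting branch to its first $\delta$ coordinates.

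Suppose $\varphi, \psi, u, w$ witness the hypothesis of the $\DCbold_\delta$-Scheme, and fix some $y_0$ with $\psi(y_0, u)$ (which that hypothesis guarantees). I would introduce the auxiliary formula
\[
\varphi'(s, z, w') \iff \bigl(\mathrm{dom}(s) < \delta \wedge \varphi(s, z, w)\bigr) \vee \bigl(\mathrm{dom}(s) \geq \delta \wedge z = y_0\bigr),
\]
with $w' := \langle w, \delta, y_0 \rangle$ so that $\varphi'$ really is a formula with a single set parameter, and keep $\psi$ unchanged. Next I would verify that $\varphi', \psi, u, w'$ satisfy the hypothesis of the $\DCbold_\mu$-Scheme: given any $\alpha$-sequence $s$ with $\alpha \in \mu$ whose entries all satisfy $\psi(\cdot, u)$, the case $\alpha < \delta$ is handled directly by the original $\DCbold_\delta$-hypothesis (which supplies a $z$ satisfying $\psi(z,u) \wedge \varphi(s, z, w)$, hence $\varphi'(s, z, w')$), while the case $\alpha \geq \delta$ is trivially witnessed by $z := y_0$ via the padded second disjunct.

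Applying $\DCbold_\mu$ then yields $f$ of domain $\mu$ with $\psi(f(\alpha), u)$ and $\varphi'(f \restrict \alpha, f(\alpha), w')$ for every $\alpha \in \mu$, and the desired $\DCbold_\delta$-branch is simply $g := f \restrict \delta$: for $\alpha < \delta$ only the first disjunct of $\varphi'$ can hold, so $\varphi(g \restrict \alpha, g(\alpha), w)$ follows at once. There is no genuine obstacle in this argument; the only mild subtlety is arranging the padding so that $\varphi'$ both lives in the language as a formula with a set parameter (hence the coding into $w'$) and is tight enough that restriction to the first $\delta$ stages recovers precisely the original relation $\varphi$.
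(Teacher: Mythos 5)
Your proof is correct and takes essentially the same route as the paper: pad the relation so that beyond stage $\delta$ any (or, in your variant, one fixed) $\psi$-witness is accepted, apply the $\DCbold_\mu$-Scheme, and restrict the resulting function to $\delta$. The only cosmetic difference is that the paper's padded clause requires merely $\psi(z,u)$ where you require $z=y_0$; both work, and your bookkeeping of the parameter $w'$ is if anything slightly more careful.
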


\begin{proof}
Let $\varphi$ and $\psi$ be formulae and $u$ and $w$ be sets such that for some $y$, $\psi(y, u)$ and for every $\alpha \in \mu$ and every $\alpha$-sequence $s = \langle x_\beta : \beta \in \alpha \rangle$ satisfying $\psi(x_\beta, u)$ for each $\beta$, there is a $z$ satisfying $\psi(z, u)$ and $\varphi(s, z, w)$. We define a new formula $\vartheta$ extending $\varphi$ to apply to any $\alpha$-sequence, $s$, for $\alpha \in \mu$ by 
$$\vartheta(s, z, w, \delta) \equiv \big( \alpha < \delta ~ \wedge ~ \varphi(s, z, w) \big) ~ \vee ~ \big( \alpha \geq \delta ~ \wedge ~ \psi(z, u) \big).$$
 Then for any function $f$ with domain $\mu$ witnessing this instance of the $\DCbold_\mu$-Scheme, $f \restrict \delta$ witnesses that $\DCbold_\delta$ holds for $\psi$.
\end{proof}

\noindent An important strengthening of the Collection Scheme is the Reflection Principle which we define next.

\begin{defn}
The \emph{Reflection Principle} is the assertion that for any formula $\varphi$ and set $a$ there is a transitive set $A$ such that $a \subseteq A$ and $\varphi$ is absolute between $A$ and the universe.
\end{defn}

\noindent The next pair of theorems show how this principle relates to dependent choice.

\begin{theorem} [\cite{ghj}]
Over $\ZFCbold^{\uminus}$, the $\DCbold$-Scheme is equivalent to the Reflection Principle.
\end{theorem}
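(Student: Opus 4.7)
The plan is to prove both implications separately; the direction that the $\DCbold$-Scheme implies the Reflection Principle is the substantive one.

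For Reflection implies $\DCbold$-Scheme, I would take an arbitrary instance of the scheme with formulas $\varphi, \psi$, parameters $u, w$, and an initial witness $y_0$ satisfying $\psi(y_0, u)$. Applying the Reflection Principle to the conjunction of $\psi$ and the auxiliary existence formula $\exists z (\psi(z, u) \wedge \varphi(s, z, w))$ yields a transitive set $A$ containing $u$, $w$, and $y_0$ over which these formulas are absolute. Within $A$ the hypothesis of the $\DCbold$-Scheme restricts to a set-sized instance of dependent choice on the set $\{x \in A : \psi(x, u)\}$, which is available in $\ZFCbold^{\uminus}$ from ordinary Choice; the resulting $\omega$-sequence witnesses the instance globally by absoluteness.

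For the harder direction, given a formula $\varphi$ and set $a$, I would list the subformulas of $\varphi$ of the form $\exists x_i \, \chi_i(x_i, \vec{y}_i)$ and call a transitive set $B$ \emph{adequate over} $B'$ if $B \supseteq B'$ and for every such subformula and every tuple $\vec{y}$ from $B'$ with $\exists x_i \, \chi_i(x_i, \vec{y})$ holding in $\Vbold$, some witness lies in $B$. The Collection Scheme combined with closing under transitive closure shows that every set admits an adequate transitive superset. I would then apply the $\DCbold$-Scheme to the relation ``$B$ is adequate over the union of the range of $s$'' in order to build a sequence $\langle A_n : n \in \omega \rangle$ with $A_0$ the transitive closure of $\{a\}$ and each $A_{n+1}$ adequate over $A_n$. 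The union $A = \bigcup_{n \in \omega} A_n$ then satisfies the Tarski--Vaught criterion applied to the subformulas of $\varphi$ and is the required reflecting set.

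The main obstacle is precisely what motivates appealing to the $\DCbold$-Scheme rather than set-level $\DCbold$: the step ``choose $A_{n+1}$ adequate over $A_n$'' ranges over a proper class of possible values defined by a formula with parameters, so the set version of dependent choice does not apply. The $\DCbold$-Scheme is tailored to iterate such a class-definable step $\omega$ many times, after which verifying Tarski--Vaught on the resulting union is routine.
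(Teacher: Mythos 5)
This theorem is quoted from \cite{ghj} and the paper gives no proof of it, so there is nothing internal to compare your argument against; judged on its own terms, your outline is the standard proof of this equivalence and is essentially correct. In the direction from the $\DCbold$-Scheme to Reflection, replacing the L\'{e}vy--Montague recourse to the $\Vbold_\alpha$ hierarchy (unavailable without Power Set) by an $\omega$-chain of transitive ``adequate'' sets obtained from Collection, and observing that the step from $A_n$ to $A_{n+1}$ is a choice from a definable proper class and hence needs the class form of $\DCbold$, is exactly the right mechanism, and the Tarski--Vaught verification on the increasing union is routine. In the converse direction your reduction to set-sized dependent choice (which $\ZFCbold^{\uminus}$ proves via a well-ordering and $\omega$-recursion) is also correct, but two small points deserve explicit attention: you must reflect the finitely many relevant formulae simultaneously (e.g.\ by coding them into a single formula with a case-distinguishing variable, since reflecting a conjunction does not reflect its conjuncts), and you must arrange that the reflecting set $A$ is closed under finite sequences of its elements, since the absoluteness of $\exists z\,(\psi(z,u)\wedge\varphi(s,z,w))$ can only be invoked when the parameter $s$ itself lies in $A$; both are handled by reflecting, in addition, a formula asserting closure under pairing. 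With those routine repairs your sketch compiles into a complete proof.
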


\begin{theorem} [\cite{fgk}]
The Reflection Principle is not provable in $\ZFCbold^{\uminus}$.
\end{theorem}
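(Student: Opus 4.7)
The plan is to exhibit a model of $\ZFCbold^{\uminus}$ in which the Reflection Principle fails. By the preceding theorem of \cite{ghj}, Reflection is equivalent to the $\DCbold$-Scheme over $\ZFCbold^{\uminus}$, so it suffices to construct a model of $\ZFCbold^{\uminus}$ in which the $\DCbold$-Scheme fails. Following \cite{fgk}, the model will arise as a symmetric submodel of a suitable forcing extension of a ground model of $\ZFCbold$, with the inner structure $H_{\kappa^+}$ ultimately providing the required counterexample.

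Concretely, I would start with $V \models \ZFCbold$ containing a singular cardinal $\kappa$ of cofinality $\omega$, and force with a poset $\mathbb{P}$ designed to introduce many partial approximations to a cofinal $\omega$-sequence in $\kappa$ without ever adding such a sequence as an element. Letting $G$ be $V$-generic for $\mathbb{P}$ and choosing an automorphism group and normal filter tuned to $\mathbb{P}$, I would pass to a symmetric submodel $V(G) \subseteq V[G]$ and take $M := (H_{\kappa^+})^{V(G)}$ as the model in which the counterexample lives.

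Verification breaks into three stages. First, confirm that $M \models \ZFCbold^{\uminus}$: Extensionality, Foundation, Pairing, Union, Infinity, Separation, Collection and the set-level Axiom of Choice all transfer from $V(G)$ to $H_{\kappa^+}$ by standard arguments, while Power Set fails since $\mathcal{P}(\kappa)$ is too large to belong to $H_{\kappa^+}$. Second, display explicit formulas $\psi$ and $\varphi$ and parameters in $M$ defining the tree $T$ of ``good'' strictly increasing finite sequences of ordinals below $\kappa$, and check that $T$ has no maximal node and is $\omega$-closed in $M$. Third, show that no $f : \omega \to \kappa$ enumerating such a branch lies in $M$; this yields failure of the $\DCbold$-Scheme at its $\aleph_0$ instance for the formulas $\psi$ and $\varphi$.

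The principal obstacle is the third stage: proving that the symmetric construction really prevents the offending branch from appearing in $M$, while simultaneously leaving intact the set-level choice and collection needed for the first stage. This is where the homogeneity and genericity lemmas of \cite{fgk} do the real work, and a complete proof would either reproduce those lemmas or route through the precise inner substructure constructed there.
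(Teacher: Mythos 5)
The paper gives no proof of this theorem; it is quoted as a black box from \cite{fgk} (Friedman--Gitman--Kanovei), so there is nothing internal to compare your attempt against. Judged on its own, your high-level strategy (refute the $\DCbold$-Scheme in a model of $\ZFCbold^{\uminus}$ obtained as an $H_{\kappa^+}$ of a symmetric extension, then invoke the \cite{ghj} equivalence) is the right shape, but the specific counterexample you propose cannot work, for two reasons. First, your tree of finite increasing sequences of ordinals below $\kappa$ is a \emph{set} in $H_{\kappa^+}$: the collection ${}^{<\omega}\kappa$ has hereditary size $\kappa$ and so belongs to $H_{\kappa^+}$. But in any model of $\ZFCbold^{\uminus}$ the $\DCbold$-Scheme holds automatically for relations on a set, since the set-level Axiom of Choice well-orders the underlying set and one defines the branch by $\omega$-recursion, taking the least admissible successor at each step. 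A witness to the failure of the $\DCbold$-Scheme must therefore involve a proper class of the model; no set-sized tree can do the job. Second, the setup is internally inconsistent: if $\kappa$ is singular of cofinality $\omega$ in $V$, the cofinal $\omega$-sequence already lies in $V$, and every symmetric submodel of $V[G]$ contains $V$, so no amount of symmetry can "prevent the offending branch from appearing."

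The actual construction of \cite{fgk} is of a different character. One forces over $L$ with a finite-support tree iteration of Jensen forcing, adding a proper class (from the point of view of the resulting $H_{\omega_1}$) of generic reals arranged in an $\omega$-levelled tree, and passes to a symmetric submodel containing every finite tuple of these reals but no infinite selection of one from each level. The essential ingredient, which has no analogue in your sketch, is the definability of the Jensen generics (they are $\Pi^1_2$ singletons), which is what makes the branchless tree a \emph{definable} class of the final model rather than merely an externally visible object; homogeneity arguments then kill the infinite branches while preserving set-level Choice and Collection. Your own final paragraph concedes that this third stage is where "the real work" lies and proposes to "reproduce those lemmas" from \cite{fgk}; since that stage is the entire content of the theorem, the proposal as written is a plan rather than a proof, and the plan as stated points at a counterexample that the theory itself rules out.
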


\begin{theorem}
Suppose that $\Vbold \models \ZFbold^{\uminus} + \DCbold_\mu$ for $\mu$ an infinite cardinal. Then for any proper class $\mathcal{C}$, which is definable over $\Vbold,$ there is a subset $b$ of $\mathcal{C}$ of cardinality $\mu$.
\end{theorem}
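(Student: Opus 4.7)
The plan is to use the $\DCbold_\mu$-Scheme directly to build an injective $\mu$-sequence inside $\mathcal{C}$. Write $\mathcal{C} = \{x : \psi(x, u)\}$ for some formula $\psi$ and parameter $u$, and define a second formula $\varphi(s, z, w)$ to express that $z$ does not lie in the range of $s$ (the parameter $w$ is vacuous and may be taken to be $\emptyset$). The idea is that the $\DCbold_\mu$-Scheme then produces $f : \mu \to \mathcal{C}$ whose range is an injective enumeration of $\mu$ distinct points of $\mathcal{C}$.

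To apply $\DCbold_\mu$ I would verify its hypotheses. Since $\mathcal{C}$ is a proper class it is in particular nonempty, so there is some $y$ with $\psi(y, u)$. For the extension step, let $\alpha \in \mu$ and let $s = \langle x_\beta : \beta \in \alpha \rangle$ be an $\alpha$-sequence of elements of $\mathcal{C}$. The range of $s$ is a set, so if $\mathcal{C}$ were contained in it then by Separation $\mathcal{C}$ would itself be a set, contradicting the assumption. Hence there is a $z$ with $\psi(z, u)$ and $z$ not in the range of $s$, that is, with $\varphi(s, z, w)$.

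The scheme then provides a function $f$ with domain $\mu$ such that $\psi(f(\alpha), u)$ holds and $f(\alpha) \notin \{f(\beta) : \beta \in \alpha\}$ for every $\alpha \in \mu$. The second condition forces $f$ to be injective, so $b = \{f(\alpha) : \alpha \in \mu\}$ is a subset of $\mathcal{C}$ in bijection with $\mu$, yielding the required set of cardinality $\mu$.

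I do not anticipate a serious obstacle here. The two points that need attention are: (i) invoking the \emph{scheme} form of $\DCbold_\mu$ so that $\psi$ is permitted to define a proper class rather than only a set, and (ii) the use of Separation to pass from proper-class-ness of $\mathcal{C}$ to the fact that $\mathcal{C}$ is not contained in any set; both are entirely routine in $\ZFbold^{\uminus}$.
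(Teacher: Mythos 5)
Your proof is correct and uses the same engine as the paper's: the formula $\varphi$ asserting that the new point avoids the range of the sequence built so far, together with the observation (via Separation) that a proper class cannot be contained in the range of any set-sized sequence. The only difference is presentational: you apply the $\DCbold_\mu$-Scheme directly to produce the injective $\mu$-sequence, whereas the paper argues by contradiction through a least $\gamma \leq \mu$ for which no subset of that size exists and then invokes $\DCbold_\gamma$; your direct version is if anything cleaner, since the extension hypothesis of the scheme holds for every $\alpha \in \mu$ without any minimality considerations.
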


\begin{proof}
Let $\mathcal{C} = \{ x : \psi(x) \}$ be a proper class. We shall in fact prove the equivalent statement that for any $\nu \leq \mu$ there is a subset $b$ of $\mathcal{C}$ and a bijection between $b$ and $\nu$. Suppose for a contradiction that this were not the case and let $\gamma$ be the least ordinal for which no such subset of size $\gamma$ exists. It is obvious that $\gamma$ must be an infinite cardinal. Let $\varphi(s, y)$ be the statement that $s \cup \{y\}$ is a subset of $\mathcal{C}$ and $y \not\in s$. Then, by assumption, for every $\alpha \in \gamma$ there is a sequence of elements of $\mathcal{C}$ of length $\alpha$. Also, since $\mathcal{C}$ is a proper class, if $s$ is an $\alpha$ length sequence from $\mathcal{C}$ then there is some $y \in \mathcal{C}$ which is not in $s$ so the hypothesis of $\DCbold_\gamma$ is satisfied. Therefore, by $\DCbold_\gamma$, there is a function $f$ with domain $\gamma$ and whose range gives a subset of $\mathcal{C}$ of cardinality $\gamma$, giving us our desired contradiction.
\end{proof}

\begin{cor} \label{classes are big}
Suppose that $\Vbold \models \ZFCDCminusbold$. Then for any proper class $\mathcal{C}$ which is definable over \Vbold and any non-zero ordinal $\gamma$ there is a definable surjection of $\mathcal{C}$ onto $\gamma$.
\end{cor}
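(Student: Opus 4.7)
The plan is to reduce this corollary to the immediately preceding theorem by using a sufficiently large well-ordered subset of $\mathcal{C}$ as the scaffolding for the desired surjection. Given a nonzero ordinal $\gamma$, I would first set $\mu = \max(\aleph_0, |\gamma|)$, which is an infinite cardinal with $\gamma \leq \mu$. Since the ambient theory $\ZFCDCminusbold$ supplies $\DCbold_\mu$, the preceding theorem yields a subset $b \subseteq \mathcal{C}$ of cardinality $\mu$. Inspecting the proof of that theorem, what is actually produced is a function $f$ with domain $\mu$ whose range equals $b$; this bijection between $\mu$ and a subset of $\mathcal{C}$ is the piece of data I will exploit.

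Second, I would define the class function $F : \mathcal{C} \to \gamma$ by
$$F(x) = \begin{cases} f^{-1}(x) & \text{if } x \in b \text{ and } f^{-1}(x) < \gamma, \\ 0 & \text{otherwise.} \end{cases}$$
For each $\alpha < \gamma \leq \mu$ the value $f(\alpha)$ lies in $b \subseteq \mathcal{C}$ and satisfies $F(f(\alpha)) = \alpha$, so $F$ does surject onto $\gamma$. The formula defining $F$ uses only $f$, $\gamma$, and the defining formula $\psi$ of $\mathcal{C}$ as parameters, hence $F$ is a definable class surjection on $\mathcal{C}$, as required.

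I do not anticipate a serious obstacle; the argument is essentially a routine packaging of the previous theorem, and it goes through uniformly for all nonzero $\gamma$, finite or infinite (if $\gamma < \omega$ the choice $\mu = \aleph_0$ does all the work). The only point worth flagging is that "definable" must be read in the standard sense that permits set parameters, so that the witness $f$ obtained above may legitimately appear inside the defining formula for $F$; this is the conventional reading of a definable class in the $\ZFCbold^{\uminus}$ framework used throughout the paper.
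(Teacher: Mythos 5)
Your proof is correct and matches what the paper intends: the corollary is stated with no proof of its own, as an immediate consequence of the preceding theorem, and your packaging --- taking $\mu = \max(\aleph_0, |\gamma|)$, extracting the bijection $f : \mu \rightarrow b$ that the theorem's proof actually produces, and sending everything outside $f[\gamma]$ to $0$ --- is the natural way to fill in that omitted step. Your closing remark about parameters is also the right reading, since the paper's later application only needs a surjection definable from the set parameters $b$, $f$ and $\gamma$.
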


\noindent We can now prove that if $j$ is a non-trivial elementary embedding from \Vbold to some class $\Mbold \subseteq \Vbold$ then $\Vbold_{\critj} \in \Vbold$.

\begin{lemma} Suppose that $\Vbold \models \ZFCDCminusbold$, $\Mbold \subseteq \Vbold$ and $j : \Vbold \rightarrow \Mbold$ is a non-trivial elementary embedding with critical point $\kappa$. Then for any $\alpha \in \kappa + 1$, $\Vbold_\alpha \in \Vbold$. 
\end{lemma}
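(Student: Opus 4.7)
The plan is to reduce the lemma to the single assertion $\Vbold_\kappa \in \Vbold$. Once that is known, for any $\alpha < \kappa$ we have $\Vbold_\alpha = \{x \in \Vbold_\kappa : \mathrm{rank}(x) < \alpha\} \in \Vbold$ by $\Delta_0$-separation applied inside the set $\Vbold_\kappa$, so the bulk of the work is the case $\alpha = \kappa$.

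I would first verify by transfinite $\in$-induction that $j$ is the identity on every set of rank less than $\kappa$: the inductive step uses elementarity and extensionality to conclude $j(x) = \{j(y) : y \in x\} = x$ whenever every member of $x$ is fixed. Consequently every element of the definable class $\Vbold_\kappa = \{x : \mathrm{rank}(x) < \kappa\}$ both lies in $\Mbold$ and is fixed pointwise by $j$.

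Now suppose for contradiction that $\Vbold_\kappa$ is a proper class. By Corollary~\ref{classes are big} there is a definable surjection $f : \Vbold_\kappa \to j(\kappa)$, so the choice present in $\ZFCDCminusbold$ yields a set $S \subseteq \Vbold_\kappa$ together with a bijection $g : j(\kappa) \to S$. Since every element of $S$ is fixed by $j$, we have $j(S) = S$; applying $j$ to the set $g$ and using elementarity then yields a bijection $j(g) : j(j(\kappa)) \to j(S) = S$ living in $\Mbold \subseteq \Vbold$, so the composite $g^{-1} \circ j(g) : j(j(\kappa)) \to j(\kappa)$ is a bijection in $\Vbold$. Iterating produces bijections $j^{n+1}(\kappa) \leftrightarrow j^n(\kappa)$ in $\Vbold$ for every $n < \omega$, forcing every $j^n(\kappa)$ to share the $\Vbold$-cardinality of $j(\kappa)$.

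The main obstacle will be converting this chain of cardinality collapses into an outright contradiction. My plan is to use the $\DCbold$-Scheme to glue the iterated bijections into a single definable bijection between the $j$-fixed ordinal $\lambda := \sup_{n<\omega} j^n(\kappa)$ and $j(\kappa)$ inside $\Vbold$, and then invoke Corollary~\ref{classes are big} a second time with a target ordinal strictly larger than $|\lambda|$ to obtain a surjection whose existence is incompatible with that bijection. Arranging the iterated bijections into a coherent definable family to which the $\DCbold$-Scheme applies, and then making precise how the second application of the Corollary collides with the first, is the most delicate part of the argument.
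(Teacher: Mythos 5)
The central gap is your claim that $j(S)=S$. From ``every element of $S$ is fixed by $j$'' you obtain only $S=\{j(x):x\in S\}\subseteq j(S)$; for the reverse inclusion you need every element of $j(S)$ to be fixed, and elementarity gives only that every element of $j(S)$ has rank below $j(\kappa)$ (the statement ``every element of $S$ has rank below $\kappa$'' transfers with $\kappa$ replaced by $j(\kappa)$). This is not a removable blemish: a set $S\subseteq\Vbold_\kappa$ in bijection with $j(\kappa)$ will in general have elements of rank cofinal in $\kappa$, so $\mathrm{rank}(S)=\kappa$ and $j$ may genuinely move $S$. This is precisely why the paper's proof inducts on $\alpha$ and derives its contradiction at a successor stage $\alpha+1<\kappa$: there $j(\alpha+1)=\alpha+1$, so ``$b\subseteq\Vbold_{\alpha+1}$'' transfers to ``$j(b)\subseteq\Vbold_{\alpha+1}$'' and every element of $j(b)$ really is fixed, yielding $j(b)=b$. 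Your reduction runs in the wrong direction: rather than recovering $\Vbold_\alpha$ for $\alpha<\kappa$ from $\Vbold_\kappa$ by separation, one should first establish $\Vbold_\alpha\in\Vbold$ for all $\alpha<\kappa$ (where the critical point has not yet been reached) and then obtain $\Vbold_\kappa$ by replacement and union.

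The endgame also does not close even if $j(S)=S$ were granted. A bijection between $j^{n+1}(\kappa)$ and $j^{n}(\kappa)$, or between $\lambda=\sup_{n}j^{n}(\kappa)$ and $j(\kappa)$, is not by itself contradictory---distinct ordinals are routinely equinumerous---and a surjection of the proper class $\Vbold_\kappa$ onto an ordinal above $|\lambda|$ is not incompatible with $|\lambda|=|j(\kappa)|$, since by Corollary~\ref{classes are big} a proper class surjects onto \emph{every} ordinal. The paper gets its contradiction in one step by aiming lower: it takes a surjection $h:b\twoheadrightarrow\kappa$ onto the critical point itself, so that every element of the domain and every value of $h$ is fixed; then $j(h)=h$ as a set of ordered pairs, while elementarity asserts that $j(h)$ surjects onto $j(\kappa)$, and since a function determines its range this forces $\kappa=j(\kappa)$. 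If you repair the first gap by working at a level $\alpha+1<\kappa$, this single-step argument is available and the iteration, the ordinal $\lambda$, and the second appeal to the Corollary all become unnecessary.
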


\begin{proof}
This is proven by induction on $\alpha \in \kappa + 1$. Clearly limit cases follow by an instance of collection so it suffices to prove that for $\alpha \in \kappa$, if $\Vbold_\alpha \in \Vbold$ then so is $\Vbold_{\alpha + 1} = \mathcal{P}(\Vbold_\alpha)$. First note that $j$ fixes every set of rank less than $\kappa$ so $j \restrict \Vbold_{\alpha + 1}$ is the identity. Now suppose for sake of a contradiction that $\Vbold_{\alpha + 1}$ was a proper class. Then, by Theorem \ref{classes are big}, we could fix a set $b \subseteq \Vbold_{\alpha + 1}$ and a surjection $$h : b \twoheadrightarrow \kappa.$$ So, by elementarity, there is a surjection $$j(h) : j(b) \twoheadrightarrow j(\kappa)$$ in $\Mbold.$ However, since $b \subseteq \Vbold_{\alpha + 1}$, $j(b)$ is also a subset of $\Vbold_{j(\alpha + 1)} = \Vbold_{\alpha + 1}$ and for any $x \in j(b)$, $j(x) = x$. Therefore, $$x \in j(b) \Longleftrightarrow j(x) \in j(b) \Longleftrightarrow x \in b$$ and hence $b = j(b)$. Then, for any $x \in b$, $$j(h)(x) = j(h)(j(x)) = j(h(x)) = h(x)$$ so $j(h) = h$. But this then contradicts the assumption that $j(h)$ was a surjection onto $j(\kappa).$ Hence $\Vbold_{\alpha + 1}$ must be a set in \Vbold as required.   
\end{proof}

\vspace{12pt}

\noindent Combining this result with \Cref{no cofinal} gives the Kunen inconsistency for the theory $(\ZFCDCminusbold)_j.$

\nonewithref

\noindent However this leaves open the question as to whether or not this result is provable without relying on the Reflection Principle, namely;

\begin{question}
Is the existence of a non-trivial, cofinal, $\Sigma_0$-elementary embedding $j : \Vbold \rightarrow \Vbold$ such that $\Vbold \models \ZFCbold_j ^{\uminus}$ inconsistent?
\end{question}

\noindent The stumbling block that one needs to overcome appears to be the following:

\begin{question}
Suppose that $\Vbold \models \ZFCbold ^{\uminus}$, $\Mbold \subseteq \Vbold$ and $j : \Vbold \rightarrow \Mbold$ is a non-trivial elementary embedding. Is $\mathcal{P}(\omega) \in \Vbold$? Is $\Vbold_{\critj} \in \Vbold$?
\end{question}




\section{Collection in Symmetric Models}\label{collection in ZF}

The important tool from the previous section was the fact that in models of  \ZFCDCminusbold proper classes are ``\emph{big}''. That is, given any non-zero ordinal, any proper class surjects onto that ordinal. This property is also a feature of models of \ZFbold as shown by the following result:

\begin{prop} Under \ZFbold, there is a surjection from any proper class onto any non-zero ordinal.
\end{prop}

\begin{proof}
Given a proper class $\mathcal{C}$, define $$S \coloneqq \{ \gamma \in \Ordbold : \exists x \in \mathcal{C} ~ \rank(x) = \gamma \}.$$ Then $S$ must be unbounded in the ordinals so, given an ordinal $\alpha$, we can take the first $\alpha$ many elements of $S$, $\{ \gamma_\beta : \beta \in \alpha \}$. Then
\begin{equation*}
f(x) = \begin{cases}
\beta, & \textit{if} ~ \rank(x) = \gamma_\beta \\
0, &\textit{otherwise}
\end{cases} 
\end{equation*} defines a surjection of $\mathcal{C}$ onto $\alpha$. \end{proof}

\noindent On the other hand, we have the following theorem from \cite{ghj}:

\begin{theorem}[\cite{ghj}]
Suppose that $\Vbold \models \ZFCbold$, $\kappa$ is a regular cardinal with $2^\omega < \aleph_\kappa$ and that $G \subseteq \Add(\omega, \aleph_\kappa)$ is $\Vbold\textrm{-generic.}$ If $\Wbold = \bigcup_{\gamma < \kappa} \Vbold[G_\gamma]$ where $G_\gamma = G \cap \Add(\omega, \aleph_\gamma)$, $($that is $G_\gamma$ is the first $\aleph_\gamma$ many of the Cohen reals added by $G)$ then $\Wbold \models \ZFCboldminus$ has the same cardinals as \Vbold and the $\DCbold_\alpha$-Scheme holds in \Wbold for all $\alpha < \kappa$, but the $\DCbold_\kappa$-Scheme and the Reflection Principle fail.  
\end{theorem}

\noindent Since \Vbold will have the same cardinals as $\Vbold[G]$, that is the full extension by all $\aleph_\kappa$ many reals, in $\Vbold[G]$ $2^\omega = \aleph_\kappa$ and therefore there is no surjection of $\mathcal{P}(\omega)$ onto $\aleph_{\kappa + 1}$. Hence there is no such surjection in $\Wbold,$ so \Wbold is a model of \ZFCboldminus ~ and the $\DCbold_\alpha$-scheme for all $\alpha \in \kappa$ in which $\mathcal{P}(\omega)$ is a proper class which is not big.

So it seems natural to ask which sub-theories of \ZFCbold also prove this feature. While we shall not answer this question here, the following results show the difficulty in coming up with a counterexample in the theory $\ZFbold^{\uminus}$.
One candidate for a counterexample in this theory is an amorphous class where:

\begin{defn}
A class $A$ is said to be \emph{amorphous} if it is infinite but not the disjoint union of two infinite subclasses. Namely, if $A = B \cup C$ then either $B$ or $C$ must be a finite set.
\end{defn}

\noindent Now one could imagine having a set $\Mbold \models \ZFbold^{\uminus}$ with $A \subseteq \Mbold$ such that \Mbold believes that $A$ is an amorphous class. Then $A$ could not surject onto $\omega$ since if $f$ were such a surjection, \mbox{$\{ x \in A : f(x) \textit{ is even} \}$} and $\{ x \in A : f(x) \textit{ is odd} \}$ would be a partition of $A$ into two infinite classes. However, we shall show that such a situation can never arise because it is inconsistent with $\ZFbold^{\uminus}$ to have a definable amorphous class. In fact, the analysis will show that a symmetric submodel of a pretame class forcing will not in general satisfy the Collection Scheme. To begin with we give the basics of symmetric extensions.



\subsection{Symmetric Extensions}

When talking about symmetric extensions we shall follow the notation of Karagila, which can be found in papers such as \cite{kar} and \cite{kar20}. The purpose of symmetric extensions is to find a suitable submodel of a generic extension of \ZFCbold in which choice fails. This allows us build objects such as Dedekind-finite sets and amorphous sets whose existence would contradict the axiom of choice while still using choice in the ground model to control what happens.

\begin{nota}
Given a set of $\mathbb{P}$-names $\{ \dot{x}_i : i \in I \}$ let $\{ \dot{x}_i : i \in I \}^\bullet$ denote the $\mathbb{P}$-name \mbox{$\{ \langle \dot{x}_i, \mathbbm{1} \rangle : i \in I \}$.}
\end{nota}

\noindent Now, given a forcing notion $\mathbb{P}$ and automorphism $\pi$ of $\mathbb{P}$, $\pi$ can be extended to $\mathbb{P}$-names by the following recursion: $$ \pi \dot{x} = \{ \langle \pi \dot{y}, \pi p \rangle : \langle \dot{y}, p \rangle \in \dot{x} \}.$$ For $\mathcal{G}$ a group of automorphisms of $\mathbb{P}$ we say that $\mathcal{F}$ is a normal filter of subgroups over $\mathcal{G}$ if: 

\begin{itemize}
\item $\mathcal{F}$ is a non-empty family of subgroups of $\mathcal{G}$.
\item $\mathcal{F}$ is closed under finite intersections and supergroups.
\item (\emph{Normality}) For any $H \in \mathcal{F}$ and $\pi \in \mathcal{G}$, $\pi H \pi^{-1} \in \mathcal{F}$.
\end{itemize} 
A symmetric system is then a triple $\langle \mathbb{P}, \mathcal{G}, \mathcal{F} \rangle$ such that $\mathbb{P}$ is a notion of forcing, $\mathcal{G}$ is a group of automorphisms of $\mathbb{P}$ and $\mathcal{F}$ is a normal filter of subgroups over $\mathcal{G}$. 

We also fix the following notation: 

\begin{itemize}
\item The \emph{stabaliser} of $\dot{x}$ under $\mathcal{G}$ is $\sym_\mathcal{G} (\dot{x}) = \{ \pi \in \mathcal{G} : \pi \dot{x} = \dot{x} \}$.
\item $\dot{x}$ is said to be $\mathcal{F}$\emph{-symmetric} when $\sym_\mathcal{G} (\dot{x}) \in \mathcal{F}$.
\item $\dot{x}$ is said to be \emph{hereditarily}  $\mathcal{F}$\emph{-symmetric} when it is $\mathcal{F}$-symmetric and for any $\langle p, \dot{y} \rangle$ in $\dot{x}$, $\dot{y}$ is also hereditarily  $\mathcal{F}$-symmetric. 
\item $\HSbold_\mathcal{F}$ denotes the class of hereditarily $\mathcal{F}$-symmetric names.
\end{itemize}

\noindent We then have the following two theorems which summarise the important consequences of this construction and can be found in chapter 15 of \cite{jec}.

\begin{lemma}[The Symmetry Lemma] For any $p \in \mathbb{P}$, automorphism $\pi$ of $\mathbb{P}$, formula $\varphi(v)$ of the forcing language and $\mathbb{P}$-name $\dot{x}$, $$p \Vdash \varphi(\dot{x}) \Longleftrightarrow \pi p \Vdash \varphi(\pi \dot{x}).$$
\end{lemma}

\begin{theorem}
If \Mbold is a transitive model of $\ZFbold,$ $\langle \mathbb{P}, \mathcal{G}, \mathcal{F} \rangle$ is a symmetric system and $G \subseteq \mathbb{P}$ is a generic filter then $\Nbold \coloneqq \HSbold^G_\mathcal{F} = \{ \dot{x}^G : \dot{x} \in \HSbold_\mathcal{F} \}$ is a transitive model of \ZFbold with $\Mbold \subseteq \Nbold \subseteq \Mbold[G]$.
\end{theorem}


\subsection{Class Forcing}\label{class forcing}

\noindent In order to describe our desired models, we shall need to do a relatively simple class forcing. Because class forcing is not the main goal of this paper, we shall only briefly sketch the construction and refer the reader to works such as \cite{fri}, \cite{hkl} or \cite{hks18} for more details on how to formally define class forcing.

As in set forcing, when trying to formalise the theory of class forcing one often works in a countable, transitive model of some second order theory such as $\GBbold^{\uminus}$. Such a model will be of the form $\mathbb{M} = \langle \Mbold, \mathcal{C} \rangle$ where \Mbold denotes the \emph{sets} of the model and $\mathcal{C}$ the classes. However, primarily for ease of notation, we shall repeatedly only talk about the first order part of the theory, noting that if \Mbold is a set model of $\ZFbold^{\uminus}$ and $\mathcal{C}$ is the collection of classes definable over \Mbold then $\langle \Mbold, \mathcal{C} \rangle$ is a model of $\GBbold^{\uminus}$. We shall say that a class $\dot{\Gamma}$ is a $\mathbb{P}$\emph{-name} if every element of $\dot{\Gamma}$ is of the form $\langle \dot{x}, p \rangle$ where $\dot{x}$ is a $\mathbb{P}$-name and $p \in \mathbb{P}$. We then define $\Mbold^\mathbb{P}$ to be the collection of $\mathbb{P}$-names which are elements of \Mbold and define $\mathcal{C}^\mathbb{P}$ as those names which are in $\mathcal{C}$. \\

\noindent Essentially, the question is which properties of set forcing are still true when the partial order, $\mathbb{P}$, is now assumed to be a proper class. Here one immediately runs into a problem when trying to prove the forcing theorem which comprises of two parts; \emph{truth} and \emph{definability}. The \emph{definability lemma} is the assertion that the forcing relation is definable in the ground model and the \emph{truth lemma} is that anything true in the generic extension is forced to be true by an element of the generic. However, as shown in \cite{hkl}, given any countable, transitive model of $\GBbold^{\uminus}$ there is a class forcing notion which does not satisfy the forcing theorem for atomic formulae. In fact, it is shown in \cite{ghh} that, over \GBbold with a global well-order, the statement that the forcing theorem holds for any class forcing is equivalent to elementary transfinite recursion for any recursion of length $\Ordbold.$ 

Moreover, even if a class forcing satisfies the forcing theorem it is not always the case that $\GBbold^{\uminus}$ will be preserved in any generic extension. The simplest such example is $\Col(\omega, \Ordbold)$ which generically adds a function collapsing the ordinals onto $\omega$. However, there is a well known collection of class forcings which both satisfy the forcing theorem and preserve all of the axioms of $\GBbold^{\uminus}$. This property was first defined by Stanley and, while such forcings are normally characterised combinatorially, for our purposes we shall use a simpler but less enlightening definition which one can prove is equivalent. 

\begin{defn}[Stanley]
Let $\mathbb{M}$ be a model of $\GBbold^{\uminus}$. A class forcing $\mathbb{P}$ is said to be \emph{pretame} for $\mathbb{M}$ if, for any generic filter $G \subseteq \mathbb{P}$, $\mathbb{M}[G]$ satisfies $\GBbold^{\uminus}$.
\end{defn}

\noindent The class forcing we shall consider is $\Add(\omega, \Ordbold)$, the forcing to add a proper class of Cohen reals. It can be proven that this satisfies the forcing theorem because it is \emph{ordinal approachable by projections},\footnote{This definition is given in \cite{hks18} as \emph{approachability by projections} however we use the terminology of \cite{hks19} where a more general definition is given.} that is to say it can be written as a continuous, increasing union of set-sized forcings, $\Add(\omega, \Ordbold) = \bigcup_{\alpha \in \Ordbold} \Add(\omega, \alpha)$. While this property in itself does not ensure that the forcing is pretame, it allows us to use an equivalent characterisation of pretameness from \cite{hks18}: 

\begin{theorem} [\cite{hks18}]\label{pretame equivalence}
Suppose that $\mathbb{M} = \langle \Mbold, \mathcal{C} \rangle$ is a model of \GBbold and $\mathbb{P}$ is a class forcing notion for $\mathbb{M}$ which satisfies the forcing theorem. Then $\mathbb{P}$ is pretame if and only if there is no set $a \in \Mbold$, name $\dot{F} \in \mathcal{C}^\mathbb{P}$ and condition $p \in \mathbb{P}$ such that $p \Vdash ``\dot{F} : \check{a} \rightarrow \Ordbold \textit{ is cofinal }"$.
\end{theorem}

\begin{theorem} Suppose that $\mathbb{M} = \langle \Mbold, \mathcal{C} \rangle$ is a model of $\GBbold + \ACbold$ and $\mathbb{P}$ is a class forcing notion for $\mathbb{M}$ which satisfies the forcing theorem. If $\mu$ is an uncountable cardinal in $\mathbb{M}$ and $\mathbb{P}$ satisfies the $\mu$-cc then $\mathbb{P}$ is pretame.
\end{theorem}

\begin{proof}
This will be proven by a variation on a standard set forcing result which uses the $\mu$-cc to approximate functions in the extension: 

\begin{claim}
Suppose that $a \in \Mbold$, $\dot{F} \in \mathcal{C}^\mathbb{P}$, $p \in \mathbb{P}$ and $p \Vdash \dot{F} : \check{a} \rightarrow \Ordbold$. Then there exists some $f \in \Mbold$ such that for all $x \in a$, $(|f(x)| \in [\Ordbold]^{< \mu})^\mathbb{M}$.
\end{claim}

To prove the claim, for each $x \in a$, let $$f(x) =\{ \alpha \in \Ordbold : \exists q \leq p ~ (q \Vdash \dot{F}(\check{x}) = \check{\alpha} ) \}.$$ Now suppose that for some $x \in a$, $f(x)$ did not have cardinality less than $\mu$. By using collection and set sized choice, we can choose a subset $Y$ of $f(x)$ of size $\mu$. Then for each $\alpha \in Y$ we can choose $q_\alpha \leq p$ such that $q_\alpha \Vdash \dot{F}(\check{x}) = \check{\alpha}$. But then this must be an antichain of size $\mu$ contradicting the assumption of $\mu$-cc.

Using the claim, we have that if $a \in \Mbold$, $\dot{F} \in \mathcal{C}^\mathbb{P}$ and $p \in \mathbb{P}$ are such that $p \Vdash \dot{F} : \check{a} \rightarrow \Ordbold$ then, taking $f$ from the conclusion of the claim, $\delta = \supremum \{ \supremum(f(x)) : x \in a \}$ is an ordinal. Therefore $$p \Vdash ``\textit{the image of } \dot{F} \textit{ is contained in } \delta"$$ so, in particular, $p$ cannot force the function to be cofinal which implies that $\mathbb{P}$ is pretame by Theorem \ref{pretame equivalence}. \end{proof}

\subsection{Breaking Collection}

\noindent We can now produce our class symmetric systems. Solely for simplicity, let $\mathbb{M}$ be a countable, transitive model of $\GBbold + \Vbold = \Lbold$. The first model we give is a symmetric system to add an amorphous class consisting of sets of Cohen reals. A detailed account of the set version of this forcing can be found in \cite{dra} and the class version of this is a simple variation. 

Let $\mathbb{P} = \Add(\omega, \Ordbold \times \omega)$ be the poset which adds $\Ordbold$ many $\omega$-blocks of Cohen reals. Now, for $\pi$ a permutation of $\Ordbold$ and $\{ \pi_\alpha : \alpha \in \Ordbold \}$ a collection of permutations of $\omega$ let $\pi$ be the permutation $$\pi : \Ordbold \times \omega \rightarrow \Ordbold \times \omega ~~~~ \pi(\alpha, n) = (\pi^0 (\alpha), \pi_\alpha(n))$$ and let $\mathcal{G}$ be the class of permutations defined in this way.  This is known as the \emph{wreath product} of the permutations of \Ordbold and the permutations of $\omega$. Then let $\mathcal{F}$ be the filter generated by $\fix(E)$ for finite sets $E \subseteq \Ordbold \times \omega.$ We can extend $\pi$ to $\mathbb{P}$-names by $$\pi p (\pi(\alpha, n)) = p(\alpha, n).$$ So the idea is that elements of $\mathcal{G}$ first permute the $\omega$-blocks of reals and then permute within the blocks. \\

\noindent Now define \begin{itemize}
\item $\dot{t}_{(\alpha, n)} \coloneqq \{ \langle \check{m}, p \rangle : p \in \mathbb{P} ~ \wedge ~ p(\alpha, n, m) = 1 \}$. This is the canonical name for the Cohen real generated by $\mathbb{P}$ restricted to the co-ordinate $(\alpha, n)$. 
\item $\dot{T}_\alpha \coloneqq \{\dot{t}_{(\alpha, n)} : n \in \omega \}^\bullet$ to be a name for the $\alpha^{th}$ $\omega$-block of reals.
\item $\dot{a} \coloneqq \{\dot{T}_\alpha : \alpha \in \Ordbold \}^\bullet$ to be a name for the collection of all \Ordbold many $\omega$-blocks. 
\end{itemize} 

\noindent One can then prove by a standard argument that in the symmetric extension $\HSbold_\mathcal{F}^G,$ $A = \dot{a}^G$ is an amorphous proper class.

The second model is the system with $\mathbb{P} = \Add(\omega, \Ordbold)$, permutations given by $\pi p (\pi \alpha, n) = p (\alpha, n)$ where $\pi$ is a permutation of \Ordbold and $\mathcal{F}$ is the filter generated by $\fix(E)$ for finite sets $E \subseteq \Ordbold$. This system will add a Dedekind-finite class of Cohen reals, $A$, and it can be proven that if $B$ is an infinite subclass of $A$ then $B$ is a proper class.

The fact that both of these symmetric submodels fail to satisfy the Collection Scheme follows from the following slightly more general theorem.

\collectionfails

\begin{proof}
To prove that the Collection Scheme fails consider classes $b$ satisfying $$\forall n \in \omega ~ \exists y \in b ~ (|y| = n ~ \wedge ~ y \subseteq A).$$ Since $\bigcup b \cap A$ is an infinite subclass of $A$, by the third assumption $b$ must be a proper class. Therefore, while for every $n \in \omega$ there is a $y$ such that $(|y| = n ~ \wedge ~ y \subseteq A)$ there is no set witnessing this for all $n$.

For the ``moreover'' part of the theorem, we shall in fact prove that any well-orderable sequence of sets can only contain finitely many elements of $A$. To see this, let $\mathcal{C} = \langle C_\alpha : \alpha \in \Ibold \rangle$ be an indexed sequence of sets where \Ibold is either \Ordbold or an infinite ordinal. We shall show that $\bigcup \mathcal{C} \cap A$ is finite and therefore that $\mathcal{C}$ cannot be a hierarchy for the universe. Suppose for a contradiction that $\bigcup \mathcal{C} \cap A$ was in fact infinite. First note that for any $\alpha \in \Ibold$, $C_\alpha \cap A$ must be finite. Now we define a sequence of ordinals $\delta_n \in \Ibold$ inductively as the least ordinal $\alpha \in \Ibold$ such that $(C_\alpha \setminus \bigcup_{m \in n} C_{\delta_m}) \cap A \neq \emptyset$. Such an ordinal must exist by the assumption that $\bigcup \mathcal{C} \cap A$ is infinite and that $\bigcup_{m \in n} (C_{\delta_m} \cap A)$ is a union of finite sets. But then $\bigcup_{n \in \omega} C_{\delta_n} \cap A$ is an infinite set, contradicting the third condition of the theorem. \end{proof}

\noindent Karagila \cite{karcom} claims that, over $\GBbold,$ any symmetric submodel of a tame\footnote{A class forcing is said to be \emph{tame} if it is pretame and also preserves Power Set.} class forcing will again be a model of $\GBbold.$ However, since it is not necessarily true that the symmetric submodel of a pretame class forcing of a model of $\GBbold^{\uminus}$ is again a model of $\GBbold^{\uminus}$ we have the following three natural general questions:

\begin{question}\label{symmetric questions} ~
\begin{itemize}
\item Suppose that $\mathbb{M} \models \GBbold^{\uminus}$. Let $\mathbb{P}$ be a pretame class forcing and $\langle \mathbb{P}, \mathcal{G}, \mathcal{F} \rangle$ a symmetric system with symmetric submodel $\Nbold.$ What theory does \Nbold satisfy?
\item Is there a combinatorial condition one can place on a symmetric system so that the symmetric submodel will satisfy $\GBbold^{\uminus}$?
\item Which class versions of sets whose existence is incompatible with choice can exist over models of \GBbold or $\GBbold^{\uminus}$?
\end{itemize}
\end{question}

\noindent In general, this final problem seems to be a difficult one to answer because, as shown by Monro in \cite{mon}, it is consistent to have a Dedekind-finite proper class:

\begin{theorem}[Monro]
Let \ZFKbold be the theory with the language of \ZFbold plus a one-place predicate \Kbold and let \Mbold be a countable transitive model of $\ZFbold.$ Then there is a model \Nbold such that \Nbold is a transitive model of \ZFKbold and \begin{eqnarray*}
\Nbold & \models & \Kbold \textit{ is a proper class which is Dedekind-finite} \\ && \textit{and can be mapped onto the universe.} 
\end{eqnarray*}
\end{theorem}

\noindent In fact it is possible to have a class symmetric system which adds a Dedekind-finite class out of \Ordbold many Cohen reals, is pretame and such that the symmetric submodel is a model of $\GBbold^{\uminus}.$ This is obtained as a symmetric submodel of the forcing $\Add(\omega, \Ordbold \times \omega)$ which adds a block of $\omega$ many Cohen reals for each ordinal. The permutation class is then those permutations which preserve the blocks and then permute within the block and the filter is, as usual, generated by the permutations which fix finite subsets of $\Ordbold \times \omega$. 

The reason why the symmetric submodel in this case models both collection and separation is that for any pair $\langle \dot{y}, p \rangle \in \Mbold^\mathbb{P} \times \mathbb{P}$ there is some ordinal $\alpha$ such that for any permutation $\pi$, $\langle \pi \dot{y}, \pi p \rangle$ can be seen as an element of the set forcing $\Add(\omega, \alpha \times \omega)$. This means that for any $H$ in the filter of subgroups, $\{ \langle \pi \dot{y}, \pi p \rangle : \pi \in H \}$ is a set which is a sufficient condition to prove the Collection Scheme. \\

\noindent To conclude this work we outline an approach to solving the first question from \ref{symmetric questions}, which we will study further in future work. As before, suppose that $\mathbb{M}$ is a countable transitive model of $\GBbold^{\uminus}$ and that $\langle \mathbb{P}, \mathcal{G}, \mathcal{F} \rangle$ is a symmetric system where $\mathbb{P}$ is a pretame class forcing. We shall say that a class name $\dot{\Gamma}$ is $\mathcal{F}$-\emph{respected} if $\{ \pi \in \mathcal{G} : \mathbbm{1} \Vdash \pi \dot{\Gamma} = \dot{\Gamma} \} \in \mathcal{F}$ and \emph{hereditarily} $\mathcal{F}$-\emph{respected} if this property holds hereditarily. Let \HRbold be the class of names $\dot{x} \in \Mbold^\mathbb{P}$ which are hereditarily respected. 

The purpose of defining symmetric names in this way is to circumvent the issue that, for an arbitrary element $H$ of the filter, $\{ \pi \dot{x} : \pi \in H \}$ may well be a proper class. However, in the set forcing case, the two resulting extensions are equal. That is to say, if $\mathbb{P}$ is a set forcing and $G \subseteq \mathbb{P}$ is generic then $\HSbold^G = \HRbold^G$ because any respected name is easily seen to be equivalent to a symmetric name by closing under its symmetry group. 

Now, suppose that our symmetric system is \emph{tenacious}\footnote{A symmetric system is tenacious if there is a dense class of conditions $p \in \mathbb{P}$ for which $\{ \pi \in \mathcal{G} : \pi p = p \} \in \mathcal{F}$. Any of the class symmetric systems we have defined are tenacious.}. Let $\dot{f}$ be a name for a function, $\dot{a}$ a name for its domain and $\dot{b}$ the obvious class forcing name for $f``a$, for example take the name defined in Theorem 3.1 of \cite{hks18}. Then, for an appropriately chosen $q \in G$ and any $\pi$ fixing $q$, $\dot{a}$ and $\dot{f}$ we will have that $\mathbbm{1} \Vdash \pi \dot{b} = \dot{b}$. Thus replacement will hold in $\HRbold^G$. Moreover, one can show that $\HRbold^G$ is a model of \GBboldminus. However, either of our earlier class symmetric systems will still witness the failure of the Collection Scheme in this model.


\bibliographystyle{alpha}
\bibliography{paperbib}

\end{document}